\title[Semi-stable MMP]{Semi-stable minimal model 
program for varieties with 
trivial canonical 
divisor}
\author{Osamu Fujino}
\date{2010/12/14, version 1.25}
\subjclass[2000]{Primary 14E30; Secondary 14D06.}
\keywords{semi-stable minimal model, varieties 
with trivial canonical divisor, termination of flips, movable divisors, movable 
cone}
\address{Department of Mathematics, Faculty of Science, 
Kyoto University, Kyoto 606-8502, Japan}
\email{fujino@math.kyoto-u.ac.jp}
\newcommand{\Supp}[0]{{\operatorname{Supp}}}
\newcommand{\Mov}[0]{{\operatorname{Mov}}}
\newcommand{\codim}[0]{{\operatorname{codim}}}
\newcommand{\Exc}[0]{{\operatorname{Exc}}}
\newtheorem{thm}{Theorem}[section]
\newtheorem{lem}[thm]{Lemma}
\newtheorem{cor}[thm]{Corollary}
\theoremstyle{definition}
\newtheorem{defn}[thm]{Definition}
\newtheorem{rem}[thm]{Remark}
\newtheorem*{ack}{Acknowledgments}       
\newtheorem*{notation}{Notation}         
\newtheorem{say}[thm]{}
\newtheorem{step}{Step}         
\begin{document}
\bibliographystyle{amsalpha+}

\maketitle 
\begin{abstract} 
We give a sufficient condition for the termination of 
flips. 
Then we discuss a semi-stable minimal model program 
for varieties with (numerically) trivial canonical divisor as 
an application. 
We also treat a slight refinement of dlt blow-ups.  
\end{abstract}

\tableofcontents

\section{Introduction}\label{sec1}

In this paper, we give a sufficient condition for the 
termination of flips. 
For the precise statement, see Theorem \ref{33}. 
By using this criterion:~Theorem \ref{33}, 
we prove the following theorem, which is 
a semi-stable minimal model program for 
varieties with trivial canonical divisor. 
It was inspired by Yoshinori Gongyo's 
paper \cite{gongyo} and 
Daisuke Matsushita's seminar talk on May 21, 2010 in Kyoto.  

\begin{thm}[Semi-stable minimal model program for varieties with 
trivial canonical divisor]\label{13}
Let $f:X\to Y$ be a proper surjective morphism from a smooth 
quasi-projective 
variety $X$ to a smooth 
quasi-projective curve $Y$ with connected fibers. 
Let $P\in Y$ be a point. 
Assume that 
$f^*P$ is a reduced simple normal crossing 
divisor on $X$ and $f$ is smooth 
over $Y\setminus P$. 
We further assume that $K_{f^{-1}Q}\sim 0$ 
for every $Q\in Y\setminus P$. 
Then there exists a sequence of flips and 
divisorial contractions 
$$
X=X_0\dashrightarrow X_1\dashrightarrow \cdots 
\dashrightarrow X_k\dashrightarrow 
\cdots\dashrightarrow X_m
$$ 
over $Y$ 
such that $K_{X_m}\sim _Y0$. 
We note that $X_m$ has only $\mathbb Q$-factorial 
terminal singularities. 
Moreover, the special fiber $S=f^{-1}_m P=f^*_m P$ 
of $f_m: X_m\to Y$ is Gorenstein, 
semi divisorial log terminal, and $K_S\sim 0$. 
\end{thm}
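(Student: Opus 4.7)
My plan is to run a $K_X$-MMP over $Y$, invoke Theorem~\ref{33} for termination, and then extract the stated properties of $X_m$ and its special fibre from MMP preservation results together with adjunction.

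Write $S:=f^*P$. The pair $(X,S)$ is log smooth, so $X$ is $\mathbb Q$-factorial and terminal and $(X,S)$ is dlt. Because $f$ is smooth over $Y\setminus P$ with $K_{f^{-1}Q}\sim 0$ on every such fibre, $K_X$ has zero intersection with every curve contained in a fibre over $Y\setminus P$. Hence every $K_X$-negative extremal ray of $\overline{NE}(X/Y)$ is represented by a curve inside $S$, and the entire MMP is supported over $P$. I would run the $K_X$-MMP over $Y$; each step is well-defined by the cone and contraction theorems for dlt pairs, and $\mathbb Q$-factoriality and terminality of $X_i$ are preserved throughout because we begin from smooth $X$.

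Termination is the place where Theorem~\ref{33} enters. The key point is that $K_X$ is vertical over $Y$ and $\mathbb Q$-linearly equivalent over $Y$ to a divisor supported on the components of $S$, so $K_X$ lies in the cone spanned by a finite collection of prime divisors, and this containment is preserved under each flip and divisorial contraction. This is exactly the movable-divisor setting Theorem~\ref{33} is engineered for, and termination of the sequence of flips follows; divisorial contractions are bounded in number because each strictly decreases the Picard rank. At the final model $X_m$ the divisor $K_{X_m}$ is $f_m$-nef and trivial on the generic fibre, and a relative abundance or base-point-free argument over the curve $Y$ promotes this to $K_{X_m}\sim_Y 0$.

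For the special fibre, the pair $(X_m,S_m)$ is still dlt by MMP preservation, provided one checks that no component of $S$ is entirely contracted along the way; this is the subtler bookkeeping step, but follows from the fibre-class relation $f^*P=S$ together with the observation that every contracted curve has zero $S$-intersection. Granting this, $S_m=f_m^*P$ is a reduced Cartier divisor on the Gorenstein variety $X_m$, hence Gorenstein; adjunction from the dlt pair $(X_m,S_m)$ combined with $K_{X_m}\sim_Y 0$ yields $K_{S_m}=(K_{X_m}+S_m)|_{S_m}\sim 0$; and the slight refinement of dlt blow-ups announced in the abstract is what packages this adjunction so that $S_m$ acquires its semi divisorial log terminal structure. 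The main obstacle I anticipate is the termination step via Theorem~\ref{33}, since $K_X$-MMP does not terminate in general and one must verify that the movable-cone hypothesis of Theorem~\ref{33} is met throughout; a secondary obstacle is the sdlt bookkeeping on the special fibre.
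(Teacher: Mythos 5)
Your overall strategy (run a $K_X$-MMP over $Y$, terminate via Theorem~\ref{33}, recover the special fibre by dlt adjunction) is the same as the paper's, but the two pivotal steps are not actually carried out. First, to invoke Theorem~\ref{33} you must run the MMP with scaling of a divisor $H$ big over $Y$ and, crucially, verify the hypothesis $K_X\not\in\overline{\Mov}(X/Y)$. Your justification --- that $K_X$ is ``vertical over $Y$'' and lies in the cone spanned by components of $S$, which is ``exactly the setting Theorem~\ref{33} is engineered for'' --- does not do this: $K_X$ is not vertical, and a divisor supported on fibre components can perfectly well lie in $\overline{\Mov}(X/Y)$ (e.g.\ a multiple of the whole fibre $f^*P$). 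The paper's argument is concrete: torsion-freeness of $f_*\mathcal O_X(mK_X)$ over the curve gives $K_X\sim_{\mathbb Q}f^*D+B$ with $B\geq 0$ vertical and containing no fibre; if $B\neq 0$ (otherwise there is nothing to prove), Zariski's lemma produces a component $E$ of $\Supp B$ with $K_X\cdot A^{n-2}\cdot E=B\cdot A^{n-2}\cdot E<0$ for an $f$-ample $A$, whereas every class in $\overline{\Mov}(X/Y)$ has nonnegative such intersection. Without this computation the key hypothesis of Theorem~\ref{33} is merely asserted, and this is the heart of the proof.

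Second, your passage from ``$K_{X_m}$ is $f_m$-nef and generically trivial'' to $K_{X_m}\sim_Y0$ via ``relative abundance or base-point-freeness'' is both heavier than needed and insufficient: at best it yields $K_{X_m}\sim_{\mathbb Q,Y}0$, which is the content of Theorem~\ref{14}, not the integral statement $K_{X_m}\sim_Y0$ on which the Gorenstein property of $S$ and $K_S\sim0$ depend. The paper instead uses the hypothesis $K_{f^{-1}Q}\sim 0$ (not merely $\sim_{\mathbb Q}0$) to get the injection $f_m^*f_{m*}\mathcal O_{X_m}(K_{X_m})\to\mathcal O_{X_m}(K_{X_m})$, which is surjective outside $S$; hence $K_{X_m}\sim f_m^*(\cdot)+B'$ with $B'\geq 0$ supported on $S$ and containing no fibre, and nefness over $Y$ plus Zariski's lemma force $B'=0$, so $K_{X_m}\sim_Y0$; then $X_m$ (terminal, hence Cohen--Macaulay, with Cartier canonical divisor) is Gorenstein and $K_S\sim 0$ by adjunction. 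Two smaller corrections: the semi divisorial log terminal property of $S$ follows directly from $(X_m,S)$ being dlt, not from the dlt blow-up Theorem~\ref{41}, which is used only in the proofs of Theorems~\ref{14} and \ref{15}; and your worry that no component of $S$ be contracted is misplaced --- components of $S$ may well be contracted, and since $f^*P\sim_Y0$ the $K_X$-MMP is also a $(K_X+f^*P)$-MMP over $Y$, so $(X_i,f_i^*P)$ remains dlt with $f_i^*P$ reduced automatically (this is the preservation of the dlt morphism structure the paper appeals to).
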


For the definition of {\em{semi divisorial log terminal}}, 
see \cite[Definition 1.1]
{fujino-semi}. 
For the proof of the 
termination of $4$-dimensional semi-stable log flips, see \cite{fujino1}. 
Theorem \ref{13} can be applied to semi-stable degenerations 
of Abelian varieties, 
Calabi-Yau varieties, and so on. From the minimal model theoretic viewpoint, 
the following theorem is a natural formulation of 
uniruled degenerations of varieties with numerically trivial 
canonical divisor (cf.~\cite[Theorem 1.1]{takayama}). 

\begin{thm}[Semi-stable minimal model program for 
varieties with numerically trivial canonical divisor]\label{14}
Let $f:X\to Y$ be a proper surjective morphism from a smooth 
quasi-projective 
variety $X$ to a smooth 
quasi-projective curve $Y$ with connected fibers. 
Let $P\in Y$ be a point. 
Assume that 
$f^*P$ is a reduced simple normal crossing divisor on $X$ and $f$ is smooth 
over $Y\setminus P$. 
We further assume that $K_{f^{-1}Q}\equiv 0$, 
equivalently, $K_{f^{-1}Q}\sim _{\mathbb Q}0$, 
for every $Q\in Y\setminus P$. 
Then there exists a sequence of flips and divisorial contractions 
$$
X=X_0\dashrightarrow X_1\dashrightarrow 
\cdots \dashrightarrow X_k\dashrightarrow 
\cdots\dashrightarrow X_m
$$ 
over $Y$ 
such that $K_{X_m}\sim _{\mathbb Q,Y}0$. 
We note that $X_m$ has only $\mathbb Q$-factorial 
terminal singularities. 
Moreover, the special fiber $S=f^{-1}_m P=f^*_m P$ of $f_m: X_m\to Y$ is 
semi divisorial log terminal and $K_S\sim_{\mathbb Q} 0$. 
Therefore, if $S$ is reducible, then 
every irreducible component of $S$ is uniruled. 
If $S$ is irreducible, then 
$S$ is uniruled if and only if $S$ is not canonical. 
\end{thm}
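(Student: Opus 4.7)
The plan is to adapt the proof of Theorem \ref{13} by replacing linear equivalence with $\mathbb Q$-linear equivalence throughout, and then to analyze the special fiber of the resulting model. Concretely, I would run a relative $K_X$-MMP over $Y$. Because $K_{f^{-1}Q}\equiv 0$ for every $Q\in Y\setminus P$, every curve lying in a smooth fiber is $K_X$-trivial, so each extremal contraction or flip must occur over $P$; in particular, $f$ is unchanged over $Y\setminus P$ throughout the program. Termination is guaranteed by Theorem \ref{33}, applied exactly as in the proof of Theorem \ref{13}. Let $f_m:X_m\to Y$ denote the resulting model, so that $X_m$ has $\mathbb Q$-factorial terminal singularities and $K_{X_m}$ is $f_m$-nef.

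Next I would upgrade nefness to relative $\mathbb Q$-triviality. Restricted to a general fiber $F$, we have $K_{X_m}|_F\sim_{\mathbb Q}0$ by hypothesis, whence $K_{X_m}\equiv_{Y} 0$. A relative base-point-free argument (some multiple $NK_{X_m}$ descends from a section on a general fiber to the pullback of a divisor on $Y$) then produces $K_{X_m}\sim_{\mathbb Q,Y}0$ after shrinking $Y$ around $P$. The special fiber $S=f_m^*P=f_m^{-1}P$ inherits the pair structure from the MMP: since the program preserves the property that $(X,S)$ is dlt with $S$ reduced, $(X_m,S)$ remains dlt with $S$ reduced, and $S$ is sdlt in the sense of \cite{fujino-semi}. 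Adjunction then yields $K_S=(K_{X_m}+S)|_S\sim_{\mathbb Q}0$, since both $K_{X_m}$ and the fiber $S$ are $\mathbb Q$-linearly trivial over $Y$.

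Finally I would settle the uniruledness dichotomy. If $S=\sum S_i$ is reducible, then on each irreducible component the different $\Theta_i$ coming from the intersections with the other components is a nonzero effective divisor, and $(S_i^{\nu},\Theta_i)$ is dlt with $K_{S_i^{\nu}}+\Theta_i\sim_{\mathbb Q}0$; Kawamata's uniruledness theorem for varieties admitting a nontrivial boundary with trivial log canonical class then forces each $S_i$ to be uniruled. If $S$ is irreducible, then $K_S\sim_{\mathbb Q}0$, and the standard dichotomy (Miyaoka--Mori produces a covering family of rational curves when $S$ is not canonical, while a canonical Calabi--Yau variety is never uniruled) gives the claimed equivalence. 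The main technical obstacle I anticipate is precisely the passage from $K_{X_m}\equiv_Y 0$ to $K_{X_m}\sim_{\mathbb Q,Y}0$ in a way that bypasses full abundance, together with the bookkeeping needed to guarantee that the sdlt structure on $S$ survives each step of the MMP; the remaining pieces are a routine adaptation of Theorem \ref{13}.
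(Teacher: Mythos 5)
Your overall route is the paper's: defer the construction of the model to Theorem \ref{13}, i.e.\ to Theorem \ref{main} applied with $\Delta=0$ (which already produces $f_m:X_m\to Y$ with $\mathbb Q$-factorial terminal singularities and $K_{X_m}\sim_{\mathbb Q,Y}0$), note that the dlt morphism structure is preserved so that $(X_m,S)$ is dlt, $S$ is sdlt, and $K_S=(K_{X_m}+S)|_S\sim_{\mathbb Q}0$ by adjunction; and your reducible case is exactly the paper's argument (adjunction to a component gives $K_{S_0}+\Theta\sim_{\mathbb Q}0$ with $\Theta\neq0$ effective because $S$ is connected), except that the relevant reference is Miyaoka--Mori \cite[Corollary 2]{mm}, not a ``Kawamata uniruledness theorem''. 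Two small corrections to your first half: the passage you single out as the main obstacle, from nef over $Y$ to $\sim_{\mathbb Q,Y}0$, is already part of the conclusion of Theorem \ref{main}; there it is done not by a relative base-point-free theorem (which does not apply, since $K_{X_m}$ is not big over $Y$) but by the torsion-free pushforward argument on the curve $Y$, giving $K\sim_{\mathbb Q}f^*D+B$ with $B$ vertical, and then Zariski's lemma killing $B$ once $K$ is nef over $Y$; no shrinking of $Y$ is needed. In particular, triviality of $K$ on the general fiber alone does not yield $K_{X_m}\equiv_Y 0$; it is nefness plus Zariski's lemma that does.

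The genuine gap is the irreducible case when $S$ is not canonical. Miyaoka--Mori is a numerical criterion: it requires $K\cdot C<0$ for a family of curves through general points, and on $S$ itself $K_S\sim_{\mathbb Q}0$, so it gives nothing directly; a resolution does not help either, since $K_{\tilde S}\sim_{\mathbb Q}\sum_i a_iE_i$ with exceptional divisors of both signs in general. The missing idea---and the reason the paper proves the refined dlt blow-up, Theorem \ref{41}, and announces its use in Theorem \ref{14}---is to construct a partial resolution $\varphi:T\to S$ extracting only divisors of nonpositive discrepancy, so that $K_T=\varphi^*K_S-E$ with $E$ effective and $E\neq0$ precisely because $S$ is not canonical; then $K_T\sim_{\mathbb Q}-E$ and Miyaoka--Mori applies on $T$, whence $S$ is uniruled. (One also uses that $S$, being an irreducible component of the reduced boundary of the dlt pair $(X_m,S)$, is normal, so Theorem \ref{41} is applicable to it.) Your sentence ``Miyaoka--Mori produces a covering family of rational curves when $S$ is not canonical'' conceals exactly this construction, so as written the forward implication of the dichotomy is unproved.
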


In this paper, we prove Theorem \ref{13} and Theorem \ref{14} as applications of 
the following theorem. 

\begin{thm}\label{main}
Let $(X, \Delta)$ be a $\mathbb Q$-factorial 
quasi-projective 
divisorial log terminal 
pair and let 
$f:X\to Y$ be a proper surjective morphism onto a smooth 
quasi-projective 
curve $Y$ with 
connected fibers. 
Assume that $(K_X+\Delta)|_F\sim _{\mathbb Q}0$ for a general 
fiber $F$ of $f$. 
Then there exists a sequence of flips and divisorial 
contractions 
\begin{align*}
(X, \Delta)=(X_0, \Delta_0)\dashrightarrow (X_1, 
\Delta_1)\dashrightarrow \cdots\\ \dashrightarrow 
(X_k, \Delta_k) 
\dashrightarrow \cdots \dashrightarrow (X_m, \Delta_m)
\end{align*}
over $Y$ such 
that 
$
K_{X_m}+\Delta_m
\sim _{\mathbb Q, Y}0$ 
where $\Delta_k$ is the pushforward of $\Delta$ on $X_k$ for every $k$. 
\end{thm}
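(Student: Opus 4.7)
My first move would be to use the hypothesis $(K_X+\Delta)|_F\sim_{\mathbb Q}0$ together with the fact that $Y$ is a smooth curve to reduce the problem to the control of a vertical divisor. A standard seesaw/base-change argument over a curve yields an expression
$K_X+\Delta\sim_{\mathbb Q,Y} E$, where $E$ is a $\mathbb Q$-divisor supported on the finite union $f^{-1}(\{P_1,\dots,P_r\})$ of fibers over the finitely many points of $Y$ at which $(K_X+\Delta)|_{f^{-1}Q}$ fails to be $\mathbb Q$-trivial. This is the divisor the MMP has to untangle.

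Next I would run a $(K_X+\Delta)$-MMP over $Y$. The existence of the needed divisorial contractions and flips in the $\mathbb Q$-factorial dlt setting is supplied by the standard MMP machinery (e.g.\ the dlt version of BCHM), so the substantive issues are (i) that we stay in the class of $\mathbb Q$-factorial dlt pairs (automatic for the standard operations), and (ii) termination. Since $(K_X+\Delta)|_F\sim_{\mathbb Q}0$ on every general fiber $F$, every $(K_X+\Delta)$-negative extremal ray over $Y$ is contracted to a point of $Y$; more precisely, each flipping/contracting locus lies inside the finite union $f^{-1}(\{P_1,\dots,P_r\})$, and the pushforward $E_k$ of $E$ continues to satisfy $K_{X_k}+\Delta_k\sim_{\mathbb Q,Y}E_k$.

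Termination is the crux of the argument and is exactly what Theorem \ref{33} is set up to handle; verifying its hypotheses in the present situation is what I expect to be the main obstacle. The point is that the $\mathbb Q$-vector space of vertical $\mathbb Q$-divisors supported on $f_k^{-1}(\{P_1,\dots,P_r\})$ is finite-dimensional and the coefficients of $\Delta_k$ stay in the fixed finite set of coefficients of $\Delta$, so a monotonicity/boundedness input of the kind required by Theorem \ref{33} (for the sequence of pushforwards $E_k$ together with $\Delta_k$) should be available. With termination in hand, the MMP stops after finitely many steps at some $(X_m,\Delta_m)$ with $K_{X_m}+\Delta_m$ nef over $Y$.

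Finally I would conclude by a negativity/Zariski-lemma argument. At the output $(X_m,\Delta_m)$ the divisor $K_{X_m}+\Delta_m$ is $f_m$-nef and $\mathbb Q$-linearly equivalent over $Y$ to the vertical effective $\mathbb Q$-divisor $E_m$. A vertical $f_m$-nef $\mathbb Q$-divisor on a family over a smooth curve is necessarily a pullback from $Y$: working one fiber at a time, the Zariski lemma (or the relative negativity lemma applied to the components of $E_m$ in a given fiber) shows that each connected component of the support of $E_m$ is proportional to the whole scheme-theoretic fiber, so $E_m=f_m^*D$ for some $\mathbb Q$-divisor $D$ on $Y$. This gives $K_{X_m}+\Delta_m\sim_{\mathbb Q,Y}0$, as required.
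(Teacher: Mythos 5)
Your overall skeleton (write $K_X+\Delta\sim_{\mathbb Q,Y}$ a vertical divisor, run a $(K_X+\Delta)$-MMP over $Y$, invoke Theorem \ref{33} for termination, finish with Zariski's lemma at the nef output) is the same as the paper's, and your first and last steps are essentially correct. But there is a genuine gap exactly at the point you yourself identify as the crux: you never verify the hypotheses of Theorem \ref{33}. That theorem requires (i) that you run the MMP with scaling of a divisor $H$ that is big over $Y$, with $(X,\Delta+H)$ dlt, $K_X+\Delta+H$ nef over $Y$, and $\mathbf B_+(H/Y)$ containing no lc centers, (ii) that every step is a flip (which you may assume after discarding the finitely many divisorial contractions), and (iii) the key condition $K_X+\Delta\notin \overline{\Mov}(X/Y)$. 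Your substitute justification --- finite-dimensionality of the space of vertical divisors and boundedness of the coefficients of $\Delta_k$ --- is not what Theorem \ref{33} asks for, and it is not by itself a termination argument: the flips in question are honest higher-dimensional flips, and no monotonicity of vertical divisors is known to force them to stop (termination of flips is open in general). So as written the central step of your proof does not go through.

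The missing idea is the following. In the nontrivial case, use torsion-freeness of $f_*\mathcal O_X(m(K_X+\Delta))$ over the smooth curve $Y$ (rather than just seesaw) to write $K_X+\Delta\sim_{\mathbb Q}f^*D+B$ with $B$ \emph{effective}, vertical, and with $\Supp B$ containing no whole fiber, and $B\not\sim_{\mathbb Q,Y}0$. Then Zariski's lemma produces an irreducible component $E$ of $\Supp B$ and an $f$-ample divisor $A$ with $B\cdot A^{n-2}\cdot E<0$, hence $(K_X+\Delta)\cdot A^{n-2}\cdot E<0$; since any class in $\overline{\Mov}(X/Y)$ pairs nonnegatively with $A^{n-2}\cdot E$, this shows $K_X+\Delta\notin\overline{\Mov}(X/Y)$, and only then does Theorem \ref{33} give termination. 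Your concluding step is then fine: at the output, $K_{X_m}+\Delta_m\sim_{\mathbb Q,Y}B_m$ is nef over $Y$ and vertical, so Zariski's lemma forces $B_m\sim_{\mathbb Q,Y}0$. Note also that for the intersection-theoretic step you need the effectivity and ``no full fiber'' properties of $B$, which your seesaw formulation does not automatically provide.
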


\begin{rem}
It is known that $(K_X+\Delta)|_F\sim _{\mathbb Q}0$ if 
and only if $(K_X+\Delta)|_F\equiv 0$. See, 
for example, \cite[Theorem 1]{ckp} and \cite[Theorem 1.2]{gongyo}. 
\end{rem}

We can also prove the following theorem as an application of 
Theorem \ref{main}. We recommend the reader to compare it with 
Kodaira's classification of elliptic 
fibrations (cf.~\cite[V.~Examples]{BPV}). 

\begin{thm}[{{cf.~\cite[Theorem 1.1]{takayama}}}]\label{15}
Let $f:X\to Y$ be a proper surjective morphism from a smooth 
quasi-projective 
variety $X$ to a smooth 
quasi-projective curve $Y$ with connected fibers. 
Let $P\in Y$ be a point. 
Assume that 
$\Supp f^*P$ is a 
simple normal crossing divisor on $X$ and $f$ is smooth 
over $Y\setminus P$. 
We further assume that $K_{f^{-1}Q}\equiv 0$, 
equivalently, $K_{f^{-1}Q}\sim _{\mathbb Q}0$, 
for every $Q\in Y\setminus P$. 
Then there exists a sequence of flips and divisorial contractions 
$$
X=X_0\dashrightarrow X_1\dashrightarrow \cdots 
\dashrightarrow X_k\dashrightarrow 
\cdots\dashrightarrow X_m
$$ 
over $Y$ 
such that $X_m$ has only $\mathbb Q$-factorial terminal 
singularities and 
$K_{X_m}\sim _{\mathbb Q,Y}0$. 
Let $S=\Supp f^*_mP$ be the special 
fiber of $f_m:X_m\to Y$. 
If $S$ is reducible, 
then every irreducible component of $S$ is uniruled. 
If $S$ is irreducible, then 
$S$ is normal and has only canonical 
singularities if and only if 
$S$ is not uniruled. 
We note that $K_S\sim _{\mathbb Q}0$ when 
$S$  is irreducible 
and has only canonical singularities. 
\end{thm}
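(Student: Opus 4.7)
The plan is to apply Theorem~\ref{main} to the pair $(X,0)$ and then analyze the resulting special fiber $S$ by adjunction. Since $X$ is smooth, $(X,0)$ is $\mathbb Q$-factorial and terminal (hence dlt); for a general fiber $F$ of $f$ over $Q\in Y\setminus P$, smoothness of $f$ over $Y\setminus P$ together with $K_{f^{-1}Q}\sim_{\mathbb Q}0$ gives $(K_X+0)|_F=K_F\sim_{\mathbb Q}0$. Theorem~\ref{main} then produces the sequence $X_0\dashrightarrow\cdots\dashrightarrow X_m$ with $K_{X_m}\sim_{\mathbb Q,Y}0$; since we run a $K_X$-MMP (with zero boundary) starting from a terminal pair, every divisorial contraction and flip preserves $\mathbb Q$-factoriality and terminality, so $X_m$ is $\mathbb Q$-factorial terminal.

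Next, since $Y$ is a smooth curve, $P$ is principal on a Zariski neighborhood $U\ni P$, so $f_m^*P$ is principal on $f_m^{-1}(U)$; combined with $K_{X_m}\sim_{\mathbb Q,Y}0$ this gives $K_{X_m}|_{f_m^{-1}(U)}\sim_{\mathbb Q}0$. Writing $f_m^*P=\sum_i a_iS_i$ with $a_i\geq 1$ and $S=\bigcup_iS_i$, triviality of $f_m^*P$ yields $a_iS_i|_{S_i}\sim-\sum_{j\ne i}a_jS_j|_{S_i}$ for each $i$. If $S$ is reducible, connectedness of $f_m^{-1}(P)$ forces each $S_i$ to meet some $S_j$ with $j\ne i$, so the right-hand side is a nonzero effective $\mathbb Q$-divisor on $S_i$; Koll\'ar--Shokurov adjunction on the normalization $\nu_i\colon S_i^\nu\to S_i$ then gives
\[
K_{S_i^\nu}+B_i=(K_{X_m}+S_i)|_{S_i^\nu}\sim_{\mathbb Q}-\frac{1}{a_i}\nu_i^*\!\left(\sum_{j\ne i}a_jS_j|_{S_i}\right),
\]
with $B_i\geq 0$ the different. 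The right-hand side is strictly anti-effective, forcing $K_{S_i^\nu}\cdot H^{\dim S_i^\nu-1}<0$ for any ample $H$ on $S_i^\nu$; thus $K_{S_i^\nu}$ is not pseudo-effective, and the BDPP criterion gives uniruledness of $S_i^\nu$, and hence of $S_i$.

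In the irreducible case $f_m^*P=aS$, the same adjunction yields $K_{S^\nu}+B\sim_{\mathbb Q}0$ with $B\geq 0$. If $S$ is not uniruled, BDPP makes $K_{S^\nu}$ pseudo-effective, hence $-B$ is pseudo-effective; with $B$ effective, intersecting against an ample class forces $B=0$ and $K_{S^\nu}\sim_{\mathbb Q}0$. Normality of $S$ follows from the conductor's contribution to $B$; combined with log canonicity of $(X_m,S)$---which should propagate from the snc pair $(X,\Supp f^*P)$ along the $K_X$-MMP---this makes $(S,0)$ klt, and a klt Calabi--Yau variety that is not uniruled must be canonical (a non-canonical crepant divisor, once extracted on a terminalization, is itself uniruled). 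The converse uses the additional claim $K_S\sim_{\mathbb Q}0$ when $S$ is normal canonical, making $K_S$ pseudo-effective and hence $S$ not uniruled. The hard part will be this singularity analysis in the irreducible case: propagating log canonicity of $(X_m,S)$ through the $K_X$-MMP, identifying $B$ with the conductor so that $B=0$ forces normality of $S$, and showing that a canonical $S$ in the log canonical setup is forced to have $K_S\sim_{\mathbb Q}0$ via a terminalization/non-canonical-implies-uniruled argument.
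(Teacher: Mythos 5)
Your reduction to Theorem \ref{main} (applied with $\Delta=0$) and your treatment of the reducible case are fine; in fact your adjunction-plus-Miyaoka--Mori argument on each component parallels what the paper does for Theorem \ref{14}, whereas for Theorem \ref{15} the paper instead runs an auxiliary MMP for $(X,\varepsilon E)$ and quotes \cite[Proposition 5-1-8]{kmm} for uniruledness of the contracted divisor --- either route works. The genuine gap is in the irreducible case: your path from ``$S$ not uniruled'' to ``$S$ canonical'' rests on the claim that log canonicity of $(X_m,\Supp f_m^*P)$ ``propagates from the snc pair $(X,\Supp f^*P)$ along the $K_X$-MMP,'' and this is not justified and is false in general. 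The MMP you run is $K_X$-negative, equivalently $(K_X+f^*P)$-negative since $f^*P\equiv_Y 0$; but in Theorem \ref{15} the fiber need not be reduced, so $\Supp f^*P\neq f^*P$ is not numerically trivial over $Y$, the steps need not be $(K_X+\Supp f^*P)$-negative, and monotonicity of discrepancies for that pair fails. (This is exactly what separates Theorem \ref{15} from Theorems \ref{13} and \ref{14}, where reducedness of $f^*P$ keeps $f_m$ a dlt morphism.) Hence neither ``$(X_m,S)$ lc'' nor ``$(S,0)$ klt'' is available, and your appeal to a terminalization of $S$ --- whose existence you would be deriving from klt-ness --- has no foundation. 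You flag this yourself as ``the hard part,'' but as written the step would fail.

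The repair, which is the paper's actual argument, needs no lc or klt input: once $S$ is normal with $K_S$ $\mathbb Q$-Cartier, apply the dlt blow-up Theorem \ref{41} to $(S,0)$ --- it is stated for an arbitrary normal pair with $\mathbb R$-Cartier log canonical divisor --- choosing $\mathcal E$ so that only divisors of nonpositive discrepancy are extracted, at least one with strictly negative discrepancy when $S$ is not canonical. This gives $\varphi\colon T\to S$ with $K_T=\varphi^*K_S-E$, $E\geq 0$, $E\neq 0$, so $K_T\sim_{\mathbb Q}-E$ and $T$, hence $S$, is uniruled by \cite[Corollary 2]{mm}; the non-normal case is handled directly on the normalization via the conductor, using that $S$ is Cohen--Macaulay. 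Separately, your converse direction is also incomplete: you need $K_S\sim_{\mathbb Q}0$ (or at least pseudo-effectivity of $K_S$) for normal canonical $S$, but your vanishing $B=0$ was derived under the hypothesis that $S$ is not uniruled, so it cannot be reused there. The missing input is terminality of $X_m$: it is smooth in codimension two, so for normal $S$ the different of $(X_m,S)$ on $S$ vanishes and $(K_{X_m}+S)|_S\sim_{\mathbb Q}0$ yields $K_S\sim_{\mathbb Q}0$ directly, which then gives non-uniruledness.
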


By combining Theorem \ref{main} with \cite[Proposition 2.7]{lai}, 
we obtain the following result. 

\begin{cor}\label{12}
Let $f:X\to Y$ be a projective surjective morphism from a smooth 
quasi-projective 
variety $X$ onto a smooth 
quasi-projective 
curve $Y$ with connected fibers. 
Assume that 
the general fiber $F$ of $f$ has a good minimal 
model 
and $\kappa (F)=0$, where 
$\kappa (F)$ is the Kodaira dimension of $F$. 
Then there exists a sequence of flips and 
divisorial contractions 
$$
X=X_0\dashrightarrow X_1\dashrightarrow \cdots \dashrightarrow 
X_k\dashrightarrow \cdots\dashrightarrow X_m
$$ 
over $Y$ 
such that 
$K_{X_m}\sim _{\mathbb Q, Y}0$. 
\end{cor}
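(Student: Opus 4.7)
The plan is to deduce the corollary in two stages: first use \cite[Proposition 2.7]{lai} to run a preliminary MMP over $Y$ that makes the general fiber into a good minimal model, and then feed the resulting pair into Theorem \ref{main} (with $\Delta=0$) to finish. Since $X$ is smooth, $(X,0)$ is a $\mathbb Q$-factorial terminal pair, so in particular dlt; this is the initial data.

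For the first step, I would apply Lai's Proposition 2.7 to the projective morphism $f:X\to Y$. Because the general fiber $F$ is assumed to admit a good minimal model, Lai's result produces a sequence of $K_X$-flips and divisorial contractions over $Y$
$$
X=X'_0\dashrightarrow X'_1\dashrightarrow\cdots\dashrightarrow X'_\ell
$$
such that the general fiber $F'$ of the induced morphism $f':X'_\ell\to Y$ is a good minimal model of $F$. Since $\kappa(F')=\kappa(F)=0$ and $K_{F'}$ is semi-ample, one concludes $K_{F'}\sim_{\mathbb Q}0$, and hence by adjunction $(K_{X'_\ell})|_{F'}\sim_{\mathbb Q}0$. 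Throughout this MMP, $\mathbb Q$-factoriality and terminal singularities are preserved, so $(X'_\ell,0)$ is still a $\mathbb Q$-factorial dlt pair and $f'$ remains proper surjective with connected fibers onto the smooth curve $Y$.

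For the second step, the pair $(X'_\ell,0)$ together with $f':X'_\ell\to Y$ now satisfies all the hypotheses of Theorem \ref{main}: $\mathbb Q$-factorial dlt, proper surjective onto a smooth curve with connected fibers, and $(K_{X'_\ell}+0)|_{F'}\sim_{\mathbb Q}0$. Applying Theorem \ref{main} yields a further sequence of flips and divisorial contractions over $Y$ terminating at some $X_m$ with $K_{X_m}\sim_{\mathbb Q,Y}0$. Concatenating this with the Lai sequence gives the sought-after MMP $X=X_0\dashrightarrow\cdots\dashrightarrow X_m$.

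The crux of the argument is the first step: one must be sure that the conclusion of Lai's Proposition 2.7 is strong enough to give $(K_{X'_\ell})|_{F'}\sim_{\mathbb Q}0$ rather than merely some weaker property such as pseudo-effectivity or nefness on the fiber. This is precisely where the hypothesis $\kappa(F)=0$ enters: it upgrades the semi-ampleness coming from a good minimal model to $\mathbb Q$-linear triviality on $F'$. Once this is in place, no further new ideas are required; Theorem \ref{main} takes care of everything else.
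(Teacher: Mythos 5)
Your proposal is correct and takes essentially the same route as the paper: both arguments first invoke \cite[Proposition 2.7]{lai} (through an MMP with scaling over $Y$) to reach a model whose general fiber is a good minimal model, observe that $\kappa(F)=0$ then forces $K|_{F'}\sim_{\mathbb Q}0$ so that Theorem \ref{main} applies, and conclude from it. The only cosmetic difference is that the paper runs a single MMP with scaling of one general $f$-big divisor $H$ and reads Lai's result inside that run, whereas you concatenate two MMPs.
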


\begin{rem}
By \cite[Corollaire 3.4]{druel}, 
$F$ has a good minimal model with $\kappa (F)=0$ if and 
only if $\kappa _{\sigma}(F)=0$, 
where $\kappa _{\sigma}(F)$ is 
the numerical Kodaira dimension in the sense of Nakayama. 
See also \cite[Theorem 1.2]{gongyo}. 
\end{rem}

Finally, in Section \ref{sec-dlt}, we treat a slight 
refinement of {\em{dlt blow-ups}} (cf.~Theorem \ref{41}) 
as an application of our 
criterion for the termination of flips:~Theorem \ref{33}, which 
generalizes \cite[17.10 Theorem]{FA} and \cite[Corollary 1.4.3]{bchm}. 
We will use Theorem \ref{41} in the 
proofs of Theorem \ref{14} and Theorem \ref{15}. 

\begin{ack}
The author would like to thank Professor 
Takeshi Abe and Yoshinori Gongyo 
for useful discussions. 
He also likes to thank Professor 
Daisuke Matsushita for giving him various 
comments and answering his questions. 
He was partially supported by 
The Inamori Foundation and by the 
Grant-in-Aid for Young Scientists (A) $\sharp$20684001 
from JSPS. 
He thanks the referee whose comments and suggestions 
made this paper much better. 
\end{ack} 

\begin{notation}
Let $X$ be a normal variety and let $D=\sum _i a_i D_i$ 
be an $\mathbb R$-divisor on $X$, 
where 
$D_i$ is a prime divisor and $a_i\in \mathbb R$ for every $i$ and 
$D_i\ne D_j$ for 
every $i\ne j$. 
In this case, $D$ is called {\em{$\mathbb R$-boudnary}} 
if and only if $0\leq a_i\leq 1$ for every $i$. 

Let $f:X\to Y$ be a proper morphism of normal algebraic varieties. 
Two $\mathbb Q$-divisors $D_1$ and $D_2$ on $X$ are 
{\em{$\mathbb Q$-linearly equivalent over 
$Y$}}, 
denoted by $D_1\sim _{\mathbb Q, Y}D_2$, if their 
difference is a $\mathbb Q$-linear 
combination of principal divisors and a $\mathbb Q$-Cartier 
divisor pulled back from 
$Y$. 

Let $X$ be a normal variety and let $\Delta$ be 
an $\mathbb R$-divisor on $X$ such that 
$K_X+\Delta$ is $\mathbb R$-Cartier. 
Let $E$ be a divisor {\em{over}} $X$. Then 
the {\em{discrepancy}} coefficient of $E$ with respect to $(X, \Delta)$ is 
denoted by $a(E, X, \Delta)$. 
\end{notation}

We work over $\mathbb C$, the complex number field, 
throughout this paper. 
We freely use the standard terminology on the log minimal model program 
in \cite{bchm} and \cite{kollar-mori}. 

\section{Easy termination lemma}\label{sec-new}
In this section, we give a sufficient condition for 
the termination of flips. 
First, let us recall the definitions of {\em{movable divisors}} and 
the {\em{movable cone}}. 

\begin{defn}[Movable divisors and movable cone]
Let $f:X\to Y$ be a projective morphism 
of normal algebraic varieties. 
A Cartier divisor $D$ on $X$  is called 
{\em{$f$-movable}} if $f_*\mathcal O_X(D)\ne 0$ and if the cokernel of the natural 
homomorphism 
$f^*f_*\mathcal O_X(D)\to \mathcal O_X(D)$ has a support 
of codimension $\geq 2$. 

Let $M$ be an $\mathbb R$-Cartier $\mathbb R$-divisor 
on $X$. 
Then $M$ is called {\em{$f$-movable}} if and only if 
$M=\sum _i a_i D_i$ where $a_i$ is a positive real number 
and $D_i$ is an $f$-movable Cartier divisor for every $i$.  

We define 
$\overline {\Mov}(X/Y)$ as the closed convex cone in $N^1(X/Y)$, 
which is called the {\em{movable cone}} of 
$f:X\to Y$,  generated by the 
classes of $f$-movable Cartier divisors. 
\end{defn}

Let us recall the minimal model program with scaling 
(cf.~\cite[3.10]{bchm}, \cite[Definition 
3.2]{birkar}, and \cite[Theorem 18.9]{fujino}). 

\begin{say}[Minimal model program with scaling]\label{32}
Let $(X, \Delta)$ be a $\mathbb Q$-factorial 
dlt pair such that $\Delta$ is an $\mathbb R$-divisor and 
let $f:X\to Y$ be a projective surjective morphism between 
quasi-projective varieties. 
Let $H$ be an effective $\mathbb R$-divisor on $X$ such that 
$(X, \Delta+H)$ is divisorial log 
terminal, $K_X+\Delta+H$ is $f$-nef, and 
the relative augmented base 
locus $\mathbf B_+(H/Y)$ (cf.~\cite[Definition 3.5.1]{bchm}) 
contains no lc centers 
of $(X, \Delta)$.  
We run the $(K_X+\Delta)$-minimal model program with scaling of $H$ over $Y$. 
We obtain a sequence of 
divisorial contractions and 
flips 
$$
(X, \Delta)=(X_0, \Delta_0)\dashrightarrow (X_1, \Delta_1)\dashrightarrow \cdots 
\dashrightarrow (X_k, \Delta_k)\dashrightarrow \cdots 
$$ 
over $Y$. 
We note that 
$$
\lambda_i=\inf\{ t\in \mathbb R\, |\, K_{X_i}+\Delta_i +tH_i \ \text{is nef over $Y$}\}, 
$$ 
where $H_i$ (resp.~$\Delta_i$) is the 
pushforward of $H$ (resp.~$\Delta$) on $X_i$ for every $i$. 
By the definition, $0\leq \lambda_i\leq 1$ and $\lambda_i\in \mathbb R$ 
for every $i$ and 
$$\lambda_0\geq \lambda_1\geq \cdots \geq \lambda_k \geq \cdots.$$
We also note that the relative augmented base locus $\mathbf B_+(H_i/Y)$ contains 
no lc centers of $(X_i, \Delta_i)$ for every $i$ (cf.~\cite[Lemma 3.10.11]{bchm}). 
\end{say}

The following theorem is the main result of this section. 

\begin{thm}[Easy termination lemma]\label{33} 
Under the same notation as in {\em{\ref{32}}}, 
we assume that 
$H$ is big over $Y$, 
every step of the $(K_X+\Delta)$-minimal model program is 
a flip, and $K_X+\Delta\not\in \overline {\Mov}(X/Y)$. 
Then it terminates after finitely many steps. 
\end{thm}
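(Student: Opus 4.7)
To prove Theorem \ref{33}, I will argue by contradiction: suppose the $(K_X+\Delta)$-MMP with scaling yields an infinite sequence of flips with scaling values $\lambda_0 \geq \lambda_1 \geq \cdots > 0$. The main idea is to transport information from each $X_i$ back to $X$ via the induced birational map $\varphi_i \colon X \dashrightarrow X_i$, which is small because by hypothesis every step is a flip. Under strict transform, $\varphi_i$ identifies the movable cones $\overline{\Mov}(X/Y) = \overline{\Mov}(X_i/Y)$ (movability is detected by the codimension of base loci, so it is insensitive to small modifications), and the class $K_{X_i} + \Delta_i + \lambda_i H_i$, which is nef over $Y$, corresponds to $K_X + \Delta + \lambda_i H$ on $X$.

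The first key observation is that any $\mathbb R$-divisor nef over $Y$ lies in the closed movable cone over $Y$: it is a limit of $\mathbb Q$-divisors ample over $Y$, which for large multiples have relatively basepoint-free, hence relatively movable, linear series. Combining this with the previous paragraph yields $K_X + \Delta + \lambda_i H \in \overline{\Mov}(X/Y)$ for every $i$. Since the sequence $\{\lambda_i\}$ is bounded and monotone, it converges to some $\lambda \geq 0$, and closedness of the movable cone gives $K_X + \Delta + \lambda H \in \overline{\Mov}(X/Y)$. In particular, if $\lambda = 0$ we immediately reach the contradiction $K_X + \Delta \in \overline{\Mov}(X/Y)$ forbidden by hypothesis.

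The hard part is ruling out $\lambda > 0$. Here I expect to exploit both the bigness of $H$ over $Y$ and the condition that $\mathbf B_+(H/Y)$ contains no lc centers of $(X,\Delta)$. Concretely, Kodaira's lemma allows one to write $\lambda H \sim_{\mathbb Q, Y} A + F$ with $A$ ample over $Y$ and $F \geq 0$ chosen so that $(X, \Delta + F)$ remains dlt; one then reinterprets the tail of the MMP as an MMP with a modified scaling whose thresholds must tend to $0$, reducing to the already-resolved case. Making this perturbation-and-reduction step precise, presumably using the rationality of each $\lambda_i$ together with a boundedness argument along the sequence of flips (so that the $\lambda_i$ cannot accumulate at a positive value without the MMP having already terminated), is the delicate ingredient that I expect to be the main obstacle.
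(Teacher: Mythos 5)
Your handling of the case $\lambda=\lim_{i\to\infty}\lambda_i=0$ is correct and is essentially the paper's own argument: relatively nef classes lie in $\overline{\Mov}(X/Y)$, smallness of the maps $X\dashrightarrow X_i$ lets you transport this back to $X$ (the paper does this by adding small relatively ample perturbations $G_i$ on $X_i$ and taking strict transforms), and closedness of the movable cone gives $K_X+\Delta\in\overline{\Mov}(X/Y)$, contradicting the hypothesis.

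The genuine gap is exactly where you flag it: the case $\lambda>0$, which you leave as a sketch. Your proposed mechanism --- Kodaira's lemma plus ``a modified scaling whose thresholds must tend to $0$'' --- does not work as stated: when $\lambda>0$ the thresholds of the given MMP stay bounded away from $0$, and no reinterpretation of the scaling forces them to $0$; moreover the $\lambda_i$ need not be rational here, since $\Delta$ and $H$ are $\mathbb R$-divisors, so the rationality you invoke is unavailable and in any case irrelevant. The missing idea is an appeal to the termination result of \cite[Corollary 1.4.2]{bchm} for klt pairs with big boundary. Concretely: each flipping contraction is $(K_{X_i}+\Delta_i)$-negative and $(K_{X_i}+\Delta_i+\lambda_iH_i)$-trivial, hence $(K_{X_i}+\Delta_i+tH_i)$-negative for $t=\tfrac12\lambda<\lambda\le\lambda_i$, so the entire infinite sequence is a sequence of $(K_X+\Delta+\tfrac12\lambda H)$-flips with scaling of $(1-\tfrac12\lambda)H$. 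Using the bigness of $H$ over $Y$ and the hypothesis that $\mathbf B_+(H/Y)$ contains no lc centers of $(X,\Delta)$, one perturbs (cf.\ \cite[Lemma 3.7.3]{bchm}, \cite[Lemma 5.1]{gongyo}) to find an effective $B\sim_{\mathbb R}\Delta+\tfrac12\lambda H$ with $(X,B)$ klt, $B$ big over $Y$, and $(X,B+(1-\tfrac12\lambda)H)$ klt with $K_X+B+(1-\tfrac12\lambda)H$ nef over $Y$; then \cite[Corollary 1.4.2]{bchm} says this MMP with scaling has no infinite sequence of flips, the desired contradiction. Without this input (or an equivalent finiteness-of-models argument) the case you yourself call the main obstacle remains unproven, so the proposal is incomplete.
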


\begin{proof}
We assume that the sequence does not terminate. 
First we assume that 
$$\lambda=\underset{i\to \infty}{\lim}\lambda_i>0.$$  
In this case, the sequence of flips we consider is a sequence of 
$(K_X+\Delta+\frac{1}{2}\lambda H)$-flips. 
We note that there exists an effective $\mathbb R$-divisor 
$B$ on $X$ such that 
$\Delta+\frac{1}{2}\lambda H\sim _{\mathbb R}B$,  
$(X, B)$ is klt,  $K_X+ B+(1-\frac{1}{2}\lambda)H$ 
is $f$-nef, $(X, B+(1-\frac{1}{2}\lambda)H)$ is 
klt, and 
$B$ is big over $Y$ (cf.~\cite[Lemma 3.7.3]{bchm} 
and \cite[Lemma 5.1]{gongyo}). 
Therefore there are no infinite sequences of 
flips by \cite[Corollary 1.4.2]{bchm}. 
It is a contradiction. 
Thus we can assume that $\lambda=0$. 
Under the assumption that $\lambda=0$, we will show that $K_X+\Delta\in \overline 
{\Mov} (X/Y)$. 
Let $G_i$ be a relative ample $\mathbb Q$-divisor on $X_i$ such 
that $G_{iX}\to 0$ in $N^1(X/Y)$ for $i\to \infty$ where 
$G_{iX}$ is the strict transform of $G_i$ on $X$. 
We note that $K_{X_i}+\Delta_i+\lambda_iH_i +G_i$ is ample over $Y$ for 
every $i$. 
Therefore the strict transform $K_{X}+\Delta+\lambda_iH +G_{iX}$ is 
movable on $X$ for every $i$. 
Thus $K_X+\Delta$ is a limit of movable $\mathbb R$-divisors in 
$N^1(X/Y)$. So $K_X+\Delta\in \overline{\Mov} (X/Y)$. 
It is a contradiction. 
Therefore the sequence of flips terminates after finitely 
many steps.
\end{proof}

\section{Proofs}\label{sec2}

In this section, we will prove various results stated in Section \ref{sec1} 
as applications of 
Theorem \ref{33}. 

\begin{proof}[Proof of {\em{Theorem \ref{main}}}] 
Before we run the minimal model program with scaling, 
we note the following easy observation. 

\begin{step}[{cf.~\cite[Proposition 4.2]{fm}}]\label{step1}
Let $m$ be a positive 
integer such that 
$m(K_X+\Delta)$ is Cartier and $m(K_X+\Delta)|_F\sim 0$ where 
$F$ is the generic fiber of $f$. 
Then we have a natural injection 
$$
0\to f^*f_*\mathcal O_X(m(K_X+\Delta))\to \mathcal O_X(m(K_X+\Delta))
$$
because $f_*\mathcal O_X(m(K_X+\Delta))$ is torsion-free and 
$Y$ is a smooth curve. 
Therefore, 
there is a $\mathbb Q$-divisor 
$D$ on $Y$ and an effective 
$\mathbb Q$-divisor $B$ on $X$ such that 
$B$ is vertical with 
respect to $f$, 
$$
K_X+\Delta\sim _{\mathbb Q}f^*D+B, 
$$ 
and $\Supp B$ does not contain any fibers of $f$. 
We note that $K_X+\Delta$ is $f$-nef if and 
only if $B=0$, equivalently, 
$K_X+\Delta\sim _{\mathbb Q, Y}0$ 
(cf.~\cite[III.~(8.2) Lemma]{BPV}). 
\end{step}

\begin{step}
We take an effective $\mathbb Q$-Cartier 
$\mathbb Q$-divisor $H$ on $X$ such that 
$H$ is big, $(X, \Delta+H)$ is 
dlt, $K_X+\Delta+H$ is nef over $Y$, 
and $\mathbf B_+(H/Y)$ contains no lc 
centers of $(X, \Delta)$. 
We run the $(K_X+\Delta)$-minimal model program with scaling of 
$H$ over $Y$ as in 
\ref{32}. 
Since divisorial contractions can occur only finitely many times, 
we can assume that every step is a flip. 
Since $B\not\sim_{\mathbb Q, Y}0$, 
we can find an irreducible component $E$ of 
$\Supp B$ such that 
$$
B\cdot A^{n-2}\cdot E<0.
$$ 
where $n=\dim X$ and $A$ is an $f$-ample 
Cartier divisor on $X$. 
This is essentially Zariski's lemma (cf.~\cite[III.~(8.2) Lemma]{BPV}). Thus 
$$
(K_X+\Delta)\cdot A^{n-2}\cdot E<0.  
$$ 
Assume that $K_X+\Delta\in \overline {\Mov}(X/Y)$. 
Then 
$$
(K_X+\Delta)\cdot A^{n-2}\cdot E\geq 0. 
$$
Therefore, $K_X+\Delta \not \in \overline {\Mov} (X/Y)$. 
Thus the $(K_X+\Delta)$-minimal model program terminates 
by Theorem \ref{33}. 
\end{step}
\begin{step} 
On the output $X_m$ of the 
minimal model program, 
$K_{X_m}+\Delta_m\sim _{\mathbb Q, Y}B_m$ where 
$B_m$ is the pushforward 
of $B$ on $X_m$. Since 
$B_m$ is nef over $Y$, 
$B_m\sim _{\mathbb Q, Y}0$ (cf.~\cite[III.~(8.2) Lemma]{BPV}). 
Therefore, $K_{X_m}+\Delta_m\sim _{\mathbb Q, Y}0$. 
\end{step}
We complete the proof of Theorem \ref{main}. 
\end{proof}

\begin{rem}
Let $f:(X, \Delta)\to Y$ be a projective 
dlt morphism from a $\mathbb Q$-factorial 
dlt pair $(X, \Delta)$ (cf.~\cite[Definition 7.1]{kollar-mori}). 
Assume that $K_X+\Delta$ is $f$-nef 
over a non-empty Zariski open set $U\subset Y$. 
Then the special termination (see, 
for example, \cite[Theorem 4.2.1]{special}) implies 
that any sequence of flips 
in the $(K_X+\Delta)$-minimal 
model 
program over $Y$ must terminate. 
We note that the special termination 
has been proved only in dimension $\leq 4$ (see, 
for example, \cite[Theorem 4.2.1]{special}). 
\end{rem}

Let us prove Theorems \ref{13}, \ref{14}, \ref{15}, and Corollary \ref{12}. 

\begin{proof}[Proof of 
{\em{Theorem \ref{13}}}] 
By the assumptions, 
$f:X\to Y$ is a dlt morphism (cf.~\cite[Definition 7.1]{kollar-mori}). 
By applying Theorem \ref{main}, 
we obtain a relative 
minimal model 
$f_m:X_m\to Y$ of $f:X\to Y$. 
We see that $f_m:X_m\to Y$ is automatically a 
dlt morphism. 
We note that $X_m$ is $\mathbb Q$-factorial and 
has only terminal singularities. 
By adjunction, 
$$
(K_{X_m}+S)|_S=K_S
$$ 
and $S$ is semi divisorial log terminal 
because $(X_m, S)$ is dlt (cf.~\cite[Remark 1.2 (3)]{fujino-semi}). 
By considering the following natural 
injection 
$$
0\to f^*f_*\mathcal O_{X_m}(K_{X_m})\to \mathcal O_{X_m}(K_{X_m}), 
$$ 
which is also surjective outside the special 
fiber $S$, 
as in Step \ref{step1} in the proof of Theorem \ref{main}, we 
obtain $K_{X_m}\sim 0$ because 
$K_{X_m}$ is nef over $Y$. 
In particular, $K_S\sim 0$ by adjunction. 
\end{proof}

\begin{proof}[Proof of {\em{Theorem \ref{14}}}]
The proof of Theorem \ref{13} works in this setting. 
If $S$ is reducible, semi divisorial log terminal, and 
$K_S\sim _\mathbb Q 0$, 
then we will show that every irreducible 
component of $S$ is uniruled. 
Let $S_0$ be an irreducible 
component of $S$. Then 
$K_{S_0}+\Theta\sim _{\mathbb Q}0$ with 
an effective $\mathbb Q$-divisor 
$\Theta \ne 0$ because $S$ is connected. 
Therefore, $S_0$ is uniruled by \cite[Corollary 2]{mm}. 
From now on, we assume that 
$S$ is irreducible. 
If $S$ has only canonical 
singularities, then $S$ is not uniruled 
because $K_S\sim _{\mathbb Q}0$. 
If $S$ is not canonical, 
then we take a dlt blow-up (cf.~Theorem \ref{41}) and 
obtain a birational morphism 
$\varphi:T\to S$ from a normal projective variety 
$T$ such that $K_T=\varphi^*K_S-E$ 
where $E$ is effective and $E\ne 0$. 
Therefore, $K_T\sim _\mathbb Q-E\ne 0$. 
Thus $T$ is uniruled 
by \cite[Corollary 2]{mm}. 
It implies that $S$ is uniruled. 
\end{proof}

\begin{proof}[Proof of {\em{Theorem \ref{15}}}]
The former part follows from Theorem \ref{main}. 
We will check the latter part. 
We assume that $S$ is reducible. 
Let $E$ be any irreducible component of $S$, 
and let $\varepsilon$ be a sufficiently 
small positive rational number. 
Apply Theorem \ref{main} to $(X, \varepsilon E)$ over $Y$. 
Then it is easy to see that 
the divisor $E$ must be contracted in this minimal 
model program. 
Therefore $E$ is uniruled by \cite[Proposition 5-1-8]{kmm}. 
From now on, we assume 
that $S$ is irreducible. 
It is sufficient to see that 
$S$ is uniruled when $S$ is not canonical. 
First we assume that $S$ is normal. 
Then we take a dlt blow-up $\varphi:T\to S$ 
(cf.~Theorem \ref{41}). 
We can write $K_T=\varphi^*K_S-E$ such that 
$E\ne 0$ is effective. 
Therefore, $T$ is uniruled by 
\cite[Corollary 2]{mm} because $K_T\sim _{\mathbb Q}-E\ne 0$. 
Thus $S$ is uniruled. 
Next we assume that $S$ is not normal. 
Let $\nu:S^\nu\to S$ be the normalization. 
Then 
$$
K_{S^\nu}+\Theta=\nu^*K_S\sim _\mathbb Q 0
$$ 
such that 
$\Theta$ is effective 
and $\Theta\ne 0$. 
We note that $S$ is Cohen--Macaulay since 
$X$ is Cohen--Macaulay and $S$ is $\mathbb Q$-Cartier 
(cf.~\cite[Corollary 5.25]{kollar-mori}). 
Therefore, 
$S^\nu$ is uniruled by \cite[Corollary 2]{mm}. 
Thus $S$ is uniruled. 
Anyway, $S$ is not uniruled if and only if $S$ has only 
canonical singularities.  
\end{proof}

\begin{proof}[Proof of {\em{Corollary \ref{12}}}] 
Let $H$ be a general effective $f$-big 
divisor on $X$ such that 
$K_X+H$ is $f$-nef and 
$(X, H)$ is dlt. 
We run the minimal model program with scaling of $H$ over $Y$. 
Then, by 
\cite[Proposition 2.7]{lai}, we can 
assume that the general fiber of $f:X\to Y$ is 
a good minimal model. 
By Theorem \ref{main}, 
this minimal model program terminates after finitely many steps. 
\end{proof}

\section{Dlt blow-ups}\label{sec-dlt}
In this section, we will give 
a slight refinement of 
\cite[17.10 Theorem]{FA} and \cite[Corollary 1.4.3]{bchm} as 
an application of Theorem \ref{33}. 
See also \cite[\S 10]{fujino}.  

\begin{thm}[Dlt blow-ups]\label{41}
Let $X$ be a normal quasi-projective 
variety and 
let $\Delta$ be an $\mathbb R$-boundary 
divisor on $X$ such that $K_X+\Delta$ is $\mathbb R$-Cartier. 
Let $f:W\to X$ be a resolution such that 
$\Exc(f)\cup \Supp f^{-1}_*\Delta$ is a simple normal crossing 
divisor 
on $W$ where $\Exc (f)$ is the exceptional 
locus of $f$. 
Let $\mathcal E$ be a subset of the $f$-exceptional 
divisors $\{E_i\}$ with the following properties{\em{:}}
\begin{itemize}
\item If $a(E_i, X, \Delta)\leq -1$, then 
$E_i\in \mathcal E$. 
\item If $E_i\in \mathcal E$, then $a(E_i, X, \Delta)\leq 0$.  
\end{itemize}
Then there is a factorization
$$f:W\overset{h}{\dashrightarrow} Z\overset {g}{\longrightarrow} X$$
with the following properties{\em{:}}
\begin{itemize}
\item[(a)] $h$ is a local isomorphism at every 
generic point of $E_i\in \mathcal E$, 
\item[(b)] $h$ contracts every exceptional divisor 
not in $\mathcal E$, 
\item[(c)] 
we have 
\begin{align*}
&h_*(K_W+f^{-1}_*\Delta +\sum _{a_i\geq -1}-a_iE_i+
\sum _{a_i<-1}E_i)\\&
=K_Z+g^{-1}_*\Delta +\sum _{E_i\in \mathcal E, \ a_i\geq -1}-a_ih_*E_i+
\sum _{a_i<-1}h_*E_i\\
&=g^*(K_X+\Delta)+\sum _{a_i<-1}(a_i+1)h_*E_i, 
\end{align*}
where $a_i=a(E_i, X, \Delta)$, and 
\item[(d)] the pair 
$$
(Z, g^{-1}_*\Delta+\sum _{E_i\in \mathcal E, \ 
a_i \geq -1}-a_i h_*E_i+\sum _{a_i<-1}h_*E_i)
$$ 
is a $\mathbb Q$-factorial dlt pair. 
\end{itemize}
In particular, if $(X, \Delta)$ is log canonical, 
then $$(Z, g^{-1}_*\Delta+\sum _{E_i\in \mathcal E, a_i\geq -1}-a_ih_*E_i)$$ 
is dlt and 
$$
K_Z+g^{-1}_*\Delta+\sum _{E_i\in \mathcal E, a_i \geq -1} 
-a_ih_*E_i=g^*(K_X+\Delta). 
$$
\end{thm}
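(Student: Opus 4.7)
My plan is to construct $Z$ as the output of an MMP over $X$ started from the log resolution $W$, using Theorem~\ref{33} for termination. First I define the auxiliary boundary
$$
\Psi \;:=\; f^{-1}_*\Delta + \sum_{E_i\in\mathcal{E},\,-1\le a_i\le 0}(-a_i)E_i + \sum_{a_i<-1}E_i + \sum_{E_i\notin\mathcal{E}}E_i
$$
on $W$. The two bulleted conditions on $\mathcal{E}$ guarantee that these three sums partition the $f$-exceptional prime divisors and that every coefficient lies in $[0,1]$; since $\Exc(f)\cup\Supp f^{-1}_*\Delta$ is simple normal crossing, $(W,\Psi)$ is log smooth, and in particular $\mathbb{Q}$-factorial dlt. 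A direct discrepancy computation gives
$$
K_W+\Psi \;=\; f^*(K_X+\Delta) + F^+ - F^-,
$$
where $F^+:=\sum_{E_i\notin\mathcal{E}}(1+a_i)E_i$ and $F^-:=-\sum_{a_i<-1}(1+a_i)E_i$ are both effective $f$-exceptional with $\Supp F^+\cap\Supp F^-=\emptyset$. If $F^+=0$ there is nothing to do (take $Z=W$), so assume $F^+\neq 0$.

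Next I would choose an effective $f$-big divisor $H$ on $W$ so that $(W,\Psi+H)$ is dlt, $K_W+\Psi+H$ is $f$-nef, and $\mathbf{B}_+(H/X)$ contains no lc center of $(W,\Psi)$ (standard, by taking a large multiple of a general $f$-ample divisor), and run the $(K_W+\Psi)$-MMP with scaling of $H$ over $X$ as in \ref{32}. Divisorial contractions strictly drop the relative Picard number, so after finitely many steps every further step is a flip. The crucial point is to show $K_W+\Psi\notin\overline{\Mov}(W/X)$ so that Theorem~\ref{33} applies. Fix an $f$-ample Cartier divisor $A$ on $W$ and compute
$$
(F^+-F^-)\cdot A^{n-2}\cdot F^+ \;=\; (F^+)^2\cdot A^{n-2} - F^+\cdot F^-\cdot A^{n-2}.
$$
The first term is strictly negative by the classical negative-definiteness of the intersection form on $f$-exceptional divisors for a projective birational morphism, and the second is nonnegative because $F^+$ and $F^-$ have disjoint supports. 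Thus some prime component $E_j\subset\Supp F^+$ satisfies $(K_W+\Psi)\cdot A^{n-2}\cdot E_j<0$. On the other hand, any $D\in\overline{\Mov}(W/X)$ pairs nonnegatively with $A^{n-2}\cdot E_j$, because a movable Cartier representative can be moved off $E_j$ and restricted, exactly as in Step~2 of the proof of Theorem~\ref{main}. This contradiction yields $K_W+\Psi\notin\overline{\Mov}(W/X)$, and Theorem~\ref{33} produces a terminal output $h:W\dashrightarrow Z$, $g:Z\to X$.

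It remains to extract (a)--(d). Because $K_Z+h_*\Psi$ is $g$-nef and numerically equivalent over $X$ to the $g$-exceptional divisor $h_*F^+-h_*F^-$, the negativity lemma forces $h_*F^+-h_*F^-\le 0$; the disjointness of the supports then gives $h_*F^+=0$, so every $E_i\notin\mathcal{E}$ is contracted by $h$, proving (b). Pushing the identity $K_W+\Psi=f^*(K_X+\Delta)+F^+-F^-$ forward via $h$ yields (c), and (d) is inherited through the steps of the $(K_W+\Psi)$-MMP with scaling, which preserves $\mathbb{Q}$-factorial dlt. The "in particular" clause for lc $(X,\Delta)$ is immediate, since the sum over $a_i<-1$ is then empty. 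The hardest point will be (a), namely that no $E_i\in\mathcal{E}$ is contracted: the coefficient of each such $E_i$ in the exceptional class $F^+-F^-$ is either $0$ (when $-1<a_i\le 0$) or negative (when $a_i\le -1$), and my approach is to combine this sign pattern with the intersection computation above to rule out any $(K_W+\Psi)$-negative extremal ray over $X$ spanned by a curve on $E_i$, perhaps after a small perturbation of $\Psi$ to strictly separate the role of the $\mathcal{E}$ components from that of the divisors slated for contraction.
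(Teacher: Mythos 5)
Your overall skeleton is the paper's: set up a log smooth pair on $W$ whose log canonical divisor is $\equiv_X F^+-F^-$ with $F^+$ supported exactly on the non-$\mathcal E$ divisors, run the MMP with scaling over $X$, get termination from Theorem \ref{33} via ``not in $\overline{\Mov}(W/X)$'', and use negativity to kill $F^+$. But the step on which everything hangs --- verifying $K_W+\Psi\notin\overline{\Mov}(W/X)$ --- is wrong as you argue it. You transplant the computation from Step 2 of the proof of Theorem \ref{main}, cutting only with $A^{n-2}$ for an $f$-ample $A$. That computation works there because the base is a curve and the divisor is vertical, so pullbacks from the base meet it trivially; for a birational $f\colon W\to X$ whose exceptional divisors have positive-dimensional images, both of your claims fail. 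There is no ``classical negative-definiteness'' of the form $(F^+)^2\cdot A^{n-2}<0$, and a class in $\overline{\Mov}(W/X)$ need not pair nonnegatively with $A^{n-2}\cdot E_j$: moving a movable representative off $E_j$ costs a large multiple of $f^*(\text{ample on }X)$, and $f^*H_X\cdot A^{n-2}\cdot E_j$ does not vanish when $\dim f(E_j)\geq 1$. Concretely, let $f\colon W\to X=\mathbb P^3$ be the blow-up of a line, $E$ the exceptional divisor, $H$ a hyperplane. Then $E^3=-2$ and $E^2\cdot f^*H=-1$, so for the $f$-ample divisor $A=-E$ one gets $E^2\cdot A=2>0$, and even for the genuinely ample $A=3f^*H-2E$ one gets $E^2\cdot A=1>0$; moreover $-E$ is $f$-movable (it is $f$-ample, and $f^*f_*\mathcal O_W(-E)\to\mathcal O_W(-E)$ is surjective), yet $(-E)\cdot A\cdot E<0$ for either choice of $A$. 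This is precisely the gap the paper's Lemma \ref{lem42} closes: one must cut with $(f^*A)^{k}\cdot H^{n-k-2}$, inserting pullbacks of an ample divisor from $X$ to localize over a general point of the image of the exceptional divisor, and only then do the Zariski-type negativity and the movability inequality both hold. So you need to prove (or quote) a statement of that type; your one-line appeal to Step 2 of Theorem \ref{main} does not carry over.

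Two further points. First, property (a) is genuinely left open in your write-up (``perhaps after a small perturbation''), and it does not follow from the sign pattern alone: a divisorial contraction could in principle contract a divisor whose coefficient in $F^+-F^-$ is zero if its covering curves meet $\Supp F^-$. When $(X,\Delta)$ is lc (so $F^-=0$) the standard discrepancy comparison works --- if $E_i\in\mathcal E$ were contracted then $a(E_i,Z,\Psi_Z)>a(E_i,W,\Psi)=a(E_i,X,\Delta)$, contradicting crepantness of the output --- but in the presence of $F^-\neq 0$ an actual argument is required, and your proposal only gestures at one. Second, a logical ordering issue: you first pass to the stage where every step is a flip and then claim that termination plus negativity forces $h_*F^+=0$, i.e.\ that the non-$\mathcal E$ divisors get contracted; but after that stage no divisor can be contracted, so the correct conclusion (as in the paper) is a contradiction showing that $F^+$ was already zero at that stage, i.e.\ those divisors were contracted during the earlier, finitely many divisorial contractions. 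This is easily repaired, but as written the derivation of (b) is circular.
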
 
\begin{proof}
For a small $\varepsilon>0$, we put  
$$
d(E_i)=\begin{cases}
1&{\text{if}} \ \ a(E_i, X, \Delta)< -1\\
-a(E_i, X, \Delta) & \text{if}\ \ E_i\in \mathcal E, a(E_i, X, \Delta)\geq -1\\
\max \{-a(E_i, X, \Delta)+\varepsilon, 0\} & \text{if} \ \ E_i\not \in \mathcal E. 
\end{cases}
$$ 
We take a general effective Cartier divisor 
$H$ on $Z$ such that 
$(W, f^{-1}_*\Delta+\sum d(E_i)E_i+H)$ is dlt and 
that $K_W+f^{-1}_*\Delta +\sum d(E_i)E_i+H$ is $f$-nef. 
We run the 
$
(K_W+f^{-1}_*\Delta+\sum d(E_i)E_i)$-minimal model program 
with scaling of $H$ over $X$. 
We note that 
\begin{align*}
&K_W+f^{-1}_*\Delta+\sum d(E_i)E_i\\&=
f^*(K_X+\Delta)+\sum _{E_i\not\in \mathcal E}(d(E_i)+a_i)E_i+\sum 
_{a_i<-1}(1+a_i)E_i. 
\end{align*}
Since divisorial contractions can occur finitely many times, 
we can assume that 
every step of the minimal model program is a flip. 
We put 
$$
E=\sum _{E_i\not\in \mathcal E}(d(E_i)+a_i)E_i+\sum 
_{a_i<-1}(1+a_i)E_i.
$$
Then $E$ is exceptional over $X$. 
We assume that $\sum _{E_i\not \in \mathcal E}(d(E_i)+a_i)E_i\ne 0$. 
Then $E\not\in \overline {\Mov}(W/X)$ by Lemma \ref{lem42} below. Therefore, 
any sequence of flips terminates after finitely many steps by Theorem \ref{33}. 
However, $E$ can not become nef over $X$ by flips since $-E$ is not 
effective. It is a contradiction. Therefore, 
$\sum _{E_i\not \in \mathcal E}(d(E_i)+a_i)E_i=0$. 
It completes the proof.
\end{proof}

The lemma below is a variant of the well-known negativity lemma. 

\begin{lem}\label{lem42}
Let $f:X\to Y$ be a birational morphism 
from a normal $\mathbb Q$-factorial algebraic 
variety $X$. 
Let $E$ be an $\mathbb R$-divisor on $X$ such that 
$\Supp E$ is $f$-exceptional and $E \in \overline {\Mov}(X/Y)$. 
Then $-E$ is effective. 
\end{lem}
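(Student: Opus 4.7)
The plan is to contradict $E\in\overline{\Mov}(X/Y)$ by producing a proper $1$-cycle $\gamma$ on $X$ with $E\cdot\gamma<0$ while $M\cdot\gamma\geq 0$ for every $f$-movable $\mathbb R$-Cartier divisor $M$. Write $E=E_+-E_-$ with effective $E_{\pm}$ sharing no common component, and suppose for contradiction that $E_+\neq 0$. Let $T_+:=f(\Supp E_+)$, a closed subset of codimension $\geq 2$ in $Y$. Fix an $f$-ample Cartier divisor $A$ on $X$ (which I may assume very ample after adding a sufficiently positive pullback from $Y$), set $d:=\dim T_+$ and $n:=\dim X$, and choose general very ample divisors $H_1,\dots,H_d$ on $Y$. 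I would then define
\[
\gamma:=E_+\cdot A^{n-2-d}\cdot f^*H_1\cdots f^*H_d,
\]
a $1$-cycle on $X$ whose image under $f$ lies in $T_+\cap H_1\cap\dots\cap H_d$, a finite set by Bertini; hence $\gamma$ is proper and intersection numbers against $\gamma$ are well defined. By general choice of the $H_i$, components of $\Supp E$ with $f$-image of dimension $<d$ contribute nothing to $\gamma$ or to $E\cdot\gamma$.

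For the first inequality, $E\cdot\gamma<0$, I would cut $X$ by $n-2-d$ general members of $|A|$ together with $d$ general members of each $|f^*H_i|$ to obtain a smooth surface $S$ equipped with the induced birational morphism $\bar f\colon S\to\bar f(S)\subset Y\cap H_1\cap\dots\cap H_d$. Components of $\Supp E_+$ of maximal $f$-image dimension $d$ restrict to curves on $S$ contracted by $\bar f$ to points; by Mumford's negative-definiteness theorem for surface contractions (applied over each contracted point), $(E_+|_S)^2<0$, i.e., $E_+^2\cdot A^{n-2-d}\cdot f^*H_1\cdots f^*H_d<0$. Since $E_+$ and $E_-$ share no common component, $E_-|_S\cdot E_+|_S\geq 0$ as an intersection of effective divisors on $S$ without common component, i.e., $E_-\cdot E_+\cdot A^{n-2-d}\cdot f^*H_1\cdots f^*H_d\geq 0$. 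Combining,
\[
E\cdot\gamma=E_+^2\cdot A^{n-2-d}\cdot f^*H_1\cdots f^*H_d-E_-\cdot E_+\cdot A^{n-2-d}\cdot f^*H_1\cdots f^*H_d<0.
\]

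For the second inequality, it suffices by positive linearity to take $M$ an $f$-movable Cartier divisor. For each prime component $B\subset\Supp E_+$ with $\dim f(B)=d$, the definition of $f$-movability forces the cokernel of $f^*f_*\mathcal O_X(M)\to\mathcal O_X(M)$ to vanish at the generic point of $B$; so some section of $f_*\mathcal O_X(M)$ on a Zariski-open neighborhood of $f(B)$ in $Y$ corresponds to a rational function $\phi\in K(X)$ with $M+(\phi)\geq 0$ on a neighborhood $V$ of $B$ in $X$ and $B\not\subset\Supp(M+(\phi))$. Since $\gamma$ is proper and $(\phi)$ is principal, $M\cdot B\cdot A^{n-2-d}\cdot f^*H_1\cdots f^*H_d=(M+(\phi))\cdot B\cdot A^{n-2-d}\cdot f^*H_1\cdots f^*H_d$; the curves in $B\cdot A^{n-2-d}\cdot f^*H_1\cdots f^*H_d$ lie over the finite set $f(B)\cap H_1\cap\dots\cap H_d$, which sits inside $f(V)$, so they lie in $V$ where $M+(\phi)$ is effective and meets $B$ properly, giving $\geq 0$. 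Summing over such $B$ with their positive coefficients yields $M\cdot\gamma\geq 0$, and passing to the limit in $\overline{\Mov}(X/Y)$ forces $E\cdot\gamma\geq 0$, contradicting the previous paragraph. Hence $E_+=0$.

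The main obstacle I anticipate is the sheaf-theoretic moving step: for each exceptional prime $B$, one must actually extract an effective representative of $M$ on a Zariski-open neighborhood of $B$ avoiding $B$ in codimension one, and verify that the test curves in $B\cdot A^{n-2-d}\cdot f^*H_1\cdots f^*H_d$ can be taken inside this neighborhood to meet the representative properly. Once this relative moving lemma is in place, the remainder reduces to Mumford's classical negativity of exceptional intersection forms and routine projection-formula bookkeeping.
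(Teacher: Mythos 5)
Your strategy is essentially the paper's: write $E=E_+-E_-$, cut $E_+$ by $d=\dim f(\Supp E_+)$ general pullbacks of very ample divisors from $Y$ and $n-2-d$ general ample divisors on $X$ to produce a complete, $f$-contracted $1$-cycle $\gamma$, obtain $E\cdot\gamma<0$ from negative definiteness on the cut-down surface together with $E_+\cdot E_-\cdot(\text{cuts})\ge 0$, and contradict this with non-negativity of movable classes against $\gamma$. (The paper phrases the negativity as the existence of a single component $E_0$ of $E_+$ with $E_0\cdot(f^*A)^k\cdot H^{n-k-2}\cdot E<0$; your use of all of $E_+$, and your exponents $d$ and $n-2-d$, are the dimensionally correct bookkeeping behind that assertion, so this half of your argument is fine and indeed more carefully justified than the paper's one line.)

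The soft spot is the inequality $M\cdot\gamma\ge 0$ for every $f$-movable Cartier divisor $M$, which the paper asserts without proof and which your justification does not quite deliver as written. Movability gives surjectivity of $f^*f_*\mathcal O_X(M)\to\mathcal O_X(M)$ at the \emph{generic} point of each prime divisor $B\subset\Supp E_+$, hence a generating section of $f_*\mathcal O_X(M)$ on an open set meeting $f(B)$, i.e.\ a neighborhood of the generic point of $f(B)$ --- not, as you claim, on a Zariski-open neighborhood of all of $f(B)$. Since $\gamma$ (the choice of the $H_i$ and the $A$-cuts) must be fixed \emph{before} $M$ ranges over all movable divisors, you cannot arrange the finitely many points $f(B)\cap H_1\cap\cdots\cap H_d$ to lie in the open set attached to a given $M$, nor prevent some curve of $\gamma$ from lying entirely inside the codimension-two cokernel support of that particular $M$; in that case the step ``the curves lie in $V$ where $M+(\phi)$ is effective and meets $B$ properly'' fails. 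You flag exactly this at the end, but it is a genuine gap that needs an argument, for instance: show that $M\cdot\gamma$ is unchanged if the cutting divisors are re-chosen generally with respect to the given $M$ (move them in general pencils and use constancy of the degree of $\mathcal O_X(M)$ on the resulting proper flat families of contracted curves), and then run your local-section argument with cuts that are general relative to $M$. With such a patch the proof closes; note this is precisely the point the paper itself leaves unproved.
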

\begin{proof}
We write $E=E_+-E_-$ such that 
$E_+$ and $E_-$ have no common irreducible components and that 
$E_+\geq 0$ and $E_-\geq 0$. 
We assume that 
$E_+\ne 0$. 
Let $A$ (resp.~$H$) be 
an ample Cartier divisor on $Y$ (resp.~$X$). 
Then we can find an irreducible component $E_0$ of $E_+$ such that 
$$
E_0\cdot (f^*A)^k\cdot H^{n-k-2}\cdot E<0
$$ 
where $\dim X=n$ and $\codim _Yf(E_+)=k$. 
On the other hand, 
$$
E_0\cdot (f^*A)^k\cdot H^{n-k-2}\cdot E\geq 0 
$$ 
if $E\in \overline {\Mov}(X/Y)$.  It is 
a contradiction. 
Therefore, $-E$ is effective. 
\end{proof}

\end{document}